\documentclass{article}
\usepackage[utf8]{inputenc}
\usepackage{mathtools}
\usepackage{amsthm}
\usepackage{amsmath, amssymb}
\usepackage{cite}
\usepackage{comment}
\usepackage{authblk}
\usepackage{hyperref}
\usepackage{float}
\usepackage{fontawesome5}
\usepackage[dvipsnames]{xcolor}

\newtheorem{lemma}{Lemma}
\newtheorem{theorem}{Theorem}
\newtheorem{corollary}{Corollary}

\begin{document}
\title{A counterexample to the conjecture on Biclique Partition number of Split Graphs and related problems}

\author{Anand Babu \quad
        Ashwin Jacob}
\affil{Department of Computer Science {\&} Engineering, \\ National Institute of Technology Calicut, Kozhikode, India.}
\date{}
\maketitle

\begin{abstract}
The biclique partition number of a graph \(G\), denoted \( \operatorname{bp}(G)\), is the minimum number of biclique subgraphs needed to partition the edge set of $G$. Lyu and Hicks \cite{lyu2023finding} posed the open problem of whether \(  \operatorname{bp}(G)  = \operatorname{mc}(G^c) - 1 \) holds for every co-chordal graph or split graph, where \( \operatorname{mc}(G^c) \) denotes the number of maximal cliques in the complement of \( G \). Such a result would extend the celebrated Graham--Pollak theorem to a more general class of graphs. In this note, we answer this problem in the negative by providing a counterexample using a split graph. We also construct an infinite family of counterexamples and prove some structural properties of biclique partitions of split graphs. Finally, we solve an open problem posed by Siewert \cite{siewert2000biclique} on the existence of singular \(n\)-tournaments with binary rank \(n\).
\end{abstract}

\section{Introduction}
The \emph{biclique partition number} of a graph \(G\), denoted \(\operatorname{bp}(G)\), is the minimum number of bicliques whose edge sets partition \(E(G)\). Determining \(\operatorname{bp}(G)\) is NP-hard for general graphs~\cite{kratzke1988eigensharp}. For the complete graph \(K_n\), partitioning the edges into stars centered at \(n-1\) vertices yields the upper bound \(\operatorname{bp}(K_n)\le n-1\). The classical result of Graham and Pollak~\cite{graham1971addressing,graham1972embedding,graham1978distance} shows that any partition of \(E(K_n)\) into bicliques requires at least \(n-1\) subgraphs. Their proof uses Sylvester's law of inertia~\cite{graham1971addressing,graham1972embedding}. Other proofs were provided by Tverberg~\cite{tverberg1982decomposition} and Peck~\cite{peck1984new} using linear-algebraic techniques. Later proofs using the polynomial space method and a counting argument were given by Vishwanathan~\cite{vishwanathan2008polynomial,vishwanathan2013counting}.

A natural generalization is to determine the minimum number of complete \(r\)-partite \(r\)-uniform hypergraphs needed to partition the edge set of the complete \(r\)-uniform hypergraph \(K_n^{(r)}\). We denote this minimum by \(\operatorname{bp}_r(n)\). For \(r=2\), this reduces to \(\operatorname{bp}(K_n)=\operatorname{bp}_2(n)\). For \(r>2\), this problem was posed by Aharoni and Linial~\cite{alon1986decomposition}. Alon ~\cite{alon1986decomposition} later proved that \(\operatorname{bp}_3(n)=n-2\) and, for fixed \(r\ge 4\), established the existence of positive constants \(c_1(r)\) and \(c_2(r)\) such that
\(
c_1(r) \cdot n^{\lfloor r/2 \rfloor} \leq \operatorname{bp}_r(n) \leq c_2(r) \cdot n^{\lfloor r/2 \rfloor}.
\)
Subsequent improvements in the lower-order terms were obtained by Cioab\u{a}, Küngden, and Verstraëte~\cite{cioabua2009decompositions}. More recently, Leader, Milićević, and Tan~\cite{leader2017decomposing} provided asymptotic refinements for the upper bound constant \( c_2(r) \), with further advances appearing in~\cite{leader2018improved,babu2019bounds}.

Further extensions of the Graham--Pollak theorem have been studied. Let \(L=\{l_1,\dots,l_p\}\) with \(l_i\in\mathbb{Z}^+\). An \emph{\(L\)-bipartite covering} of a graph \(G\) is a family of bicliques such that every edge of \(G\) appears in exactly \(l_i\) bicliques for some \(l_i\in L\). The minimum size of such a family is the \emph{\(L\)-bipartite covering number} of \(G\), denoted \(\operatorname{bp}_L(G)\). For \(K_n\) and \(L=\{1\}\), we have \(\operatorname{bp}_{\{1\}}(K_n)=\operatorname{bp}(K_n)\). The case in which every edge is covered exactly \(\lambda\) times (i.e., \(\operatorname{bp}_2(n,\{\lambda\})\)) was studied by De Caen, Gregory, and Pritikin~\cite{de1993minimum}. For lists \(L\) consisting of all odd integers, \(\operatorname{bp}_2(n,L)\) was investigated by Radhakrishnan, Sen, and Vishwanathan~\cite{radhakrishnan2000depth}. Asymptotically tight bounds for several lists appear in~\cite{cioabua2013variations,babu2022improved}. Alon \textit{et al.}~\cite{alon2023new} improved bounds for \(L=\{1,2\}\); further improvements appear in~\cite{grytczuk2024neighborly}. Exact bounds for some odd lists appear in Buchanan \textit{et al.}~\cite{buchanan2023odd,buchanan2024odd}. Leader and Tan also provided improvements for lists of odd integers~\cite{leader2024odd}. Generalizations for \(L=\{1,\dots,p\}\) were explored by Cioab\u{a} and Tait~\cite{cioabua2013variations}. Extensions of this to hypergraphs can be found in ~\cite{babu2021multicovering,babu2022improved}. For the list \(L=\{1,\cdots,p\}\), an \emph{\( r \)-partite \( p \)-multicover} of the complete \( r \)-uniform hypergraph \( K_n^{(r)} \) is a collection of complete \( r \)-partite \( r \)-graphs such that every hyperedge is covered exactly \( \ell \) times for some \( \ell \in L \). The \emph{ \(r\)--partite \(p\)-multicovering number} is the minimum size of such a cover. This is denoted by \( \operatorname{bp}_r(n,p) \). The bipartite \( p \)-multicovering problem (i.e., the case \( r = 2 \)) was first introduced by Alon~\cite{alon1997neighborly}.  Note that $\operatorname{bp}_2(n,\{1\}) = \operatorname{bp}_2(n)$. For fixed \( p \geq 2 \), Alon~\cite{alon1997neighborly} established the bounds
\(
(1+o(1))\left(\frac{p!}{2^p}\right)^{1/p} n^{1/p} \leq \operatorname{bp}_2(n, p) \leq (1+o(1))p n^{1/p}.
\)
Huang and Sudakov~\cite{huang2012counterexample} improved the lower bound to
\(
(1+o(1))\left(\frac{p!}{2^{p-1}}\right)^{1/p} n^{1/p}.
\)
The exact values of biclique covering and partitioning numbers are known for only a few graph classes. Precise bounds for Johnson graphs, odd cycles, complete multipartite graphs, and random graphs can be found in~\cite{alon2021addressing}. Related investigations into optimal addressings for other graph families appear in~\cite{elzinga2004addressing, alon2021addressing}.
\paragraph{}
The remainder of the paper is organized as follows. In Section~\ref{prelim}, we present preliminaries and outline the connection between biclique partitions and addressings into the squashed cube. In Section~\ref{main}, we present our main results. We answer the question in negative posed by Lyu and Hicks~\cite{lyu2023finding} by giving counterexamples that are split-graphs, including an infinite family. We then determine the biclique partition number for unbalanced split graphs and prove structural properties of biclique partitions in balanced split graphs. Finally, we prove the existence of singular \(n\)-tournaments with binary rank \(n\), answering an open problem posed by Siewert~\cite{siewert2000biclique}.

\section{Preliminaries}\label{prelim}
Let \( G \) be a graph. We denote its vertex set by \( V(G) \) and its edge set by \( E(G) \). For any subset \( S \subseteq V(G) \), the \emph{subgraph induced by \( S \)} is denoted by \( G[S] \).

A \emph{clique} (or complete graph) is a graph where every pair of distinct vertices is adjacent. A clique on \( n \) vertices is denoted by \( K_n \). A \emph{maximal clique} is a clique that is not a proper subset of any larger clique in the graph. A \emph{maximum clique} is a clique of the largest possible size in the graph; its size is referred to as the \emph{clique number}, denoted \( \omega(G) \).

A \emph{biclique} (or complete bipartite graph) is a graph whose vertex set can be partitioned into two disjoint subsets, \( V_1 \) and \( V_2 \), such that every vertex in \( V_1 \) is adjacent to every vertex in \( V_2 \), and no edges exist between vertices within the same subset. A biclique with \( |V_1| = m \) and \( |V_2| = n \) is denoted by \( K_{m,n} \).

A \emph{star} is a biclique with one part of size $1$ (the \emph{center}), and the other part contains all remaining vertices.

An \emph{independent set} in a graph \( G \) is a set of vertices no two of which are adjacent. A \emph{maximal independent set} is one that cannot be extended by including any additional vertex from \( G \) without losing the independence property. A \emph{maximum independent set} is an independent set of the largest possible size in \( G \), and its cardinality is called the \emph{independence number}, denoted \( \alpha(G) \).

A \emph{tournament} is an orientation of a complete graph. Equivalently, a tournament on vertex set \([n]\) is a directed graph in which, for every pair of distinct vertices \(i\) and \(j\), exactly one of the arcs \(i\to j\) or \(j\to i\) is present. The adjacency matrix \(A\) of a tournament is the \(\{0,1\}\)-matrix with \(A_{ij}=1\) if and only if \(i\to j\).

A tournament is \emph{regular} if all its vertices have the same score (outdegree). Equivalently, a regular \(n\)-tournament matrix is a tournament matrix with equal row sums. If \(A\) is a regular \(n\)-tournament, then \(n\) must be odd. A tournament is \emph{near-regular} if the maximum difference between two scores is \(1\). Thus, a near-regular \(n\)-tournament matrix is a tournament matrix in which half of the row sums equal \(n/2-1\) and half equal \(n/2\). If \(A\) is a near-regular \(n\)-tournament, then \(n\) must be even.

For a graph \(G\), we write \(G^c\) for its complement and \(N_G(v)\) for the (open) neighborhood of a vertex \(v\).

A graph \( G \) is called a \emph{split graph} if its vertex set admits a partition \( V(G) = K \cup S \), where \( G[K] \) is a clique, and \( G[S] \) is an independent set. Such a partition is referred to as a \emph{split partition}. A split graph is said to be \emph{balanced} if there exists a split partition in which \( |K| = \omega(G) \) and \( |S| = \alpha(G) \); otherwise, it is called \emph{unbalanced}. A split partition is said to be \emph{\( S \)-max} if \( |S| = \alpha(G) \) and \emph{\( K \)-max} if \( |K| = \omega(G) \). In balanced split graphs, the split partition that satisfies both \( |K| = \omega(G) \) and \( |S| = \alpha(G) \) is unique.

The following theorem is a consequence of the results of Hammer and Simeone~\cite{hammer1981splittance} and is presented in~\cite{golumbic2004algorithmic, collins2018finding}.

\begin{theorem}[Hammer and Simeone\cite{hammer1981splittance}]\label{theorem1}
Let \( G \) be a split graph with a vertex partition \( V(G) = K \cup S \). Then, exactly one of the following holds.
\begin{enumerate}
    \item \( |K| = \omega(G) \) and \( |S| = \alpha(G) \). \hfill (balanced)
    \item \( |K| = \omega(G) - 1 \) and \( |S| = \alpha(G) \). \hfill (unbalanced, \( S \)-max)
    \item \( |K| = \omega(G) \) and \( |S| = \alpha(G) - 1 \). \hfill (unbalanced, \( K \)-max)
\end{enumerate}
Moreover, in case~(2), there exists a vertex \( s \in S \) such that \( K \cup \{s\} \) forms a clique, and in case~(3), there exists a vertex \( k \in K \) such that \( S \cup \{k\} \) is an independent set.
\end{theorem}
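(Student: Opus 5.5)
We will prove Theorem~\ref{theorem1} by comparing an arbitrary split partition \(V(G)=K\cup S\) with a maximum clique and a maximum independent set. The basic observation is that a clique meets \(S\) in at most one vertex, since \(S\) is independent. Likewise, an independent set meets \(K\) in at most one vertex, since \(K\) is a clique. Hence any clique \(Q\) satisfies \(|Q|\le |K|+1\), and any independent set \(I\) satisfies \(|I|\le |S|+1\). Since \(K\) and \(S\) themselves are a clique and an independent set, this gives
\[
|K|\le \omega(G)\le |K|+1 \quad\text{and}\quad |S|\le \alpha(G)\le |S|+1 .
\]

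The crucial step is to exclude the case \(\omega(G)=|K|+1\) and \(\alpha(G)=|S|+1\) occurring together. Suppose both hold. Take a maximum clique \(Q\) and a maximum independent set \(I\). By the bounds above, \(Q=K\cup\{s\}\) for some \(s\in S\), and \(I=S\cup\{k\}\) for some \(k\in K\). Then \(s\) is adjacent to every vertex of \(K\), in particular to \(k\). On the other hand, \(k\) is nonadjacent to every vertex of \(S\), in particular to \(s\). This is a contradiction, so at most one of the two inequalities is strict. The three remaining possibilities are exactly cases (1), (2) and (3).

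It remains to see that the cases are mutually exclusive and to derive the "moreover" parts. Exclusivity is immediate, since the three cases assign different pairs of values to \((|K|,|S|)\) relative to \((\omega(G),\alpha(G))\). In case (2) we have \(|K|=\omega(G)-1\). A maximum clique has size \(|K|+1\), so it must contain exactly one vertex \(s\in S\) together with all of \(K\). Hence \(K\cup\{s\}\) is a clique. Symmetrically, in case (3) a maximum independent set has size \(|S|+1\), so it equals \(S\cup\{k\}\) for some \(k\in K\), and \(S\cup\{k\}\) is independent.

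The only genuinely nontrivial point is the simultaneous-strictness contradiction. Once the sizes are pinned down, the vertex-counting bounds force the extremal sets to have the rigid forms \(K\cup\{s\}\) and \(S\cup\{k\}\), and the contradiction follows directly.
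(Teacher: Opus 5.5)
Your proof is correct and complete. The bounds $|K|\le\omega(G)\le|K|+1$ and $|S|\le\alpha(G)\le|S|+1$ are right. So is the contradiction between $s$ being adjacent to all of $K$ and $k$ being nonadjacent to all of $S$. The forced shapes $K\cup\{s\}$ and $S\cup\{k\}$ then give the ``moreover'' part.

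The paper gives no proof of its own here. It imports the result from Hammer and Simeone via the expositions of Golumbic and Collins--Trenk, and your argument is the standard elementary one.

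One small point you could add concerns the parenthetical labels. Since $|K|+|S|=|V(G)|$, your trichotomy gives $\omega(G)+\alpha(G)=|V(G)|$ in case~(1) and $\omega(G)+\alpha(G)=|V(G)|+1$ in cases~(2) and~(3). This sum is a graph invariant, so either every split partition of $G$ is of type~(1) or none is. That is what justifies the labels, because the paper defines ``balanced'' by the existence of some suitable partition.

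The same counting also gives a one-line alternative to your $s$--$k$ argument for excluding simultaneous strictness. A clique and an independent set share at most one vertex, so $\omega(G)+\alpha(G)\le|V(G)|+1$, whereas simultaneous strictness would give $|V(G)|+2$.
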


\paragraph{}
For a \{0, 1\} matrix \(M\) of dimensions \(n \times m\), consider the following three definitions of rank.
The (standard) rank of \(M\) over \( \mathbb{R}\), denoted by \(rank_R(M)\), is the minimal \(k\) for which there exist real matrices \(A\) and \(B\) of dimensions \(n \times k\) and \(k \times m\) respectively, such that \(M = A \cdot B\) where the operations are over \( \mathbb{R}\). The binary rank of \(M\), denoted by \( rank_{bin}(M)\), is the minimal \(k\) for which there exist
\{0, 1\} matrices \(A\) and \(B\) of dimensions \(n \times k\) and \(k \times m\) respectively, such that \(M = A \cdot B\) where the operations are over \( \mathbb{R}\). Equivalently, \(rank_{bin}(M)\) is the smallest number of monochromatic combinatorial rectangles in a partition of the ones in M. The \emph{non-negative integer rank} of \(M\), denoted \(r_{\mathbb{Z}^+}(M)\), is the smallest integer \(k\) for which there is an \(n \times k\) matrix \(B\) and \(k\times m\) matrix \(C\) over the positive integers such that \(M = A \cdot B\). If $M$ is a $\{0,1\}$ matrix then $A$ and $B$ are $\{0,1\}$ matrices. Consequently, the binary rank and nonnegative integer rank of the $\{0,1\}$ matrices are equal. The term rank of \( M\), denoted by \( rank_t(M)\), is the smallest number of rows and columns containing all of the nonzero entries of \(M\).

The following is the celebrated theorem of Graham and Pollak~\cite{graham1972embedding}.
\begin{theorem}[Graham-Pollak\cite{graham1972embedding}]\label{grahampollak}
    The minimum number of bicliques required to partition the edge set of a complete graph on $n$ vertices is $n-1$.
\end{theorem}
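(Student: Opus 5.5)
The upper bound is witnessed by stars: order the vertices $v_1,\dots,v_n$ and, for $i=1,\dots,n-1$, let $B_i$ be the star centered at $v_i$ with leaves $\{v_{i+1},\dots,v_n\}$. Every edge $v_iv_j$ with $i<j$ lies in exactly one of these stars, namely $B_i$. So $\operatorname{bp}(K_n)\le n-1$, and the content of Theorem~\ref{grahampollak} is the lower bound. For that I will use the classical Tverberg/Peck linear-algebra argument.

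Suppose $E(K_n)$ is partitioned into bicliques $B_1,\dots,B_k$, where $B_t$ has parts $X_t,Y_t\subseteq[n]$. For real variables $x_1,\dots,x_n$, the partition property gives the polynomial identity
\[
\sum_{i<j} x_ix_j \;=\; \sum_{t=1}^{k}\Big(\sum_{i\in X_t}x_i\Big)\Big(\sum_{j\in Y_t}x_j\Big),
\]
since each pair $\{i,j\}$ appears in exactly one product. Combining this with $\big(\sum_i x_i\big)^2=\sum_i x_i^2+2\sum_{i<j}x_ix_j$ yields
\[
\sum_{i=1}^n x_i^2 \;=\; \Big(\sum_{i=1}^n x_i\Big)^2 - 2\sum_{t=1}^k L_t(x)M_t(x),
\]
where $L_t(x)=\sum_{i\in X_t}x_i$ and $M_t(x)=\sum_{j\in Y_t}x_j$.

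Now assume for contradiction that $k\le n-2$. Impose the $k+1$ homogeneous linear conditions $L_1(x)=\dots=L_k(x)=0$ and $\sum_i x_i=0$. Since $k+1\le n-1<n$, this system has a nonzero solution $x\in\mathbb{R}^n$. At that $x$ the right-hand side vanishes, so $\sum_i x_i^2=0$, which forces $x=0$, a contradiction. Hence $k\ge n-1$, and together with the star construction, $\operatorname{bp}(K_n)=n-1$.

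The only delicate step is choosing which linear forms to kill. We kill only one factor $L_t$ of each product, which costs $k$ conditions instead of $2k$. We add the condition $\sum_i x_i=0$ so that the square term also disappears. Counting these constraints, $k+1$ of them, against the $n$ variables is exactly what produces the bound $n-1$.

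An alternative route is the Graham–Pollak inertia argument. Write $J-I=\sum_t (u_tv_t^{\top}+v_tu_t^{\top})$, where $u_t,v_t$ are the indicator vectors of $X_t,Y_t$. Each summand has at most one positive eigenvalue, while $J-I$ has $n-1$ negative eigenvalues. Subadditivity of inertia then gives $k\ge n-1$. I would present the polynomial argument above, since it is shorter and fully elementary.
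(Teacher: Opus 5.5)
Your proof is correct. The star construction gives the upper bound. Your lower bound is exactly Tverberg's argument: the identity $\sum_i x_i^2=(\sum_i x_i)^2-2\sum_t L_t(x)M_t(x)$, then killing one factor of each product together with $\sum_i x_i$, which is $k+1<n$ homogeneous conditions when $k\le n-2$.

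The paper does not prove this theorem itself. It cites Graham and Pollak, remarks that their proof uses Sylvester's law of inertia, and mentions Tverberg and Peck as giving alternative linear-algebraic proofs. So your main route is one of the alternatives the paper names, and your sketched second route is the original one.

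\textbf{What each route buys.} Yours is more elementary. It needs only that a homogeneous system with fewer equations than unknowns has a nonzero solution, plus positivity of a sum of squares. The inertia route needs the spectrum of $J-I$ and subadditivity of the negative index, which comes from Sylvester's law or Courant--Fischer. In exchange, the inertia route makes the general bound $\operatorname{bp}(G)\ge\max\{n_+(A),n_-(A)\}$ immediate for any graph with adjacency matrix $A$, where $n_\pm$ count positive and negative eigenvalues. This is useful beyond complete graphs. Your argument also yields this bound if $\mathbf{1}^{\perp}$ is replaced by a maximal subspace on which the quadratic form is definite.

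\textbf{A slip in your inertia sketch.} To get $k\ge n-1$ from the $n-1$ negative eigenvalues of $J-I$, you need each $u_tv_t^{\top}+v_tu_t^{\top}$ to have at most one \emph{negative} eigenvalue. This holds because $u_t\perp v_t$, so its nonzero eigenvalues are $\pm\|u_t\|\,\|v_t\|$. Counting positive eigenvalues, as you wrote, only yields $k\ge 1$.
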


\subsection*{Biclique covers and Addressing}\label{def2}
Let \( \Sigma = \{0, 1, \ast\} \) be an alphabet, where the symbol \( \ast \) is called a \emph{joker}. For \( d \in \mathbb{N}\), let \( \Sigma^d \) denote the set of all strings of length \( d \) over \( \Sigma \). Each string \( x \in \Sigma^d \) defines a subcube \( H(x) \subseteq \{0,1\}^d \) consisting of all binary strings obtained by replacing each joker in \( x \) with \( 0 \) or \( 1 \) in all possible ways. We call the number of jokers in \( x \), denoted \( j(x) \), the \emph{dimension} of the subcube \( H(x) \). Hence \( |H(x)| = 2^{j(x)} \).

For two strings \( x, y \in \Sigma^d \), we define the distance \( d(x,y) \) as the number of coordinates where one string has \( 0 \) and the other has \( 1 \). Note that if \( d(x,y) \geq 1 \), then the subcubes \( H(x) \) and \( H(y) \) are disjoint. Moreover, if the dimensions of \( H(x) \) and \( H(y) \) are \( i \) and \( j \) respectively, then any two binary strings \( u \in H(x) \) and \( v \in H(y) \) differ in at most \( d(x,y) + i + j \) coordinates.

An \emph{addressing} of a graph \( G = (V, E) \) into a squashed \(m\)-cube is a mapping \( f \colon V \to \Sigma^m \) that is distance-preserving. Hence, \(d(f(u), f(v)) \geq 1\) for all edges \(uv \in E\). Graham and Pollak showed that every connected graph admits such an addressing. The minimum value of \( m \) for which such an addressing exists is called the \emph{squashed cube dimension} of \( G \). For the complete graph \( K_n \), Graham and Pollak proved that the squashed cube dimension equals \( n - 1 \), and this value equals the minimum number of bicliques required to partition the edge set of \( K_n \).

We interpret strings in \( \Sigma^d \) through their connection to axis-aligned boxes. A family of axis-aligned boxes in \(\mathbb{R}^d\) is \(k\)-neighborly if the intersection of every two boxes has dimension at least \(d-k\) and at most \(d-1\). Let \(n(k, d)\) denote the maximum size of such a family. As explained in \cite{alon1997neighborly}, \(n(k, d)\) equals the maximum number of vertices in a complete graph whose edge set can be covered by \(d\) bicliques, with each edge covered at least once and at most \(k\) times. Equivalently, \(n(k, d)\) is the maximum size of a family \(F \subseteq \Sigma^d\) such that \(1 \leq d(x, y) \leq k\) for every pair of distinct strings \(x, y \in F\). For $k=1$, \(n(k,d)\) is precisely obtained by using the Graham-Pollak theorem.

Given such a family \( F \), we define its \emph{volume} as
\[
\operatorname{vol}(F) = \sum_{x \in F} 2^{j(x)},
\]
which equals the total number of binary strings covered by \(\bigcup_{x \in F} H(x)\). Geometrically, \(2^{j(x)}\) represents the volume of the standard box corresponding to \( x \), and \(\operatorname{vol}(F)\) is the total volume of all boxes in the family.

The connection between biclique coverings and addressings is established as follows. Given a biclique covering \(\mathcal{B} = \{B_1,\dots,B_{d}\}\) of \(E(K_n)\), we obtain an addressing \( f \colon V(K_n) \to \Sigma^d \) by assigning to each vertex \(v\) a string \(f(v) \in \Sigma^{d}\) where the \(i\)-th coordinate is \(0\) if \(v\) belongs to the first part of \(B_i\), is \(1\) if \(v\) belongs to the second part of \(B_i\), and is \(\ast\) if \(v\) does not belong to \(B_i\). Therefore, \(n(k, d)\) is the maximum possible number of strings in a family $F \subseteq \Sigma^{d}$ so that $1 \leq d(x, y) \leq k$ holds for any two distinct $x, y \in F$.

\section{Main Result}\label{main}
\begin{lemma}\label{counterexample}
There exists a split graph $G$ with vertex partition $V(G)=K \cup S$, where $K$ is a clique and $S$ is an independent set, such that \(\operatorname{bp}(G)=\operatorname{mc}(G^c)-2\).
\end{lemma}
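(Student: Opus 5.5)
The plan is to exhibit an explicit small split graph, compute $\operatorname{mc}(G^c)$ by a structural count, and obtain $\operatorname{bp}(G)$ from a matching upper and lower bound. The lower bound comes from Theorem~\ref{grahampollak}. If $|K|=n$, then $G[K]\cong K_n$, and restricting any biclique partition of $G$ to $K$ gives a biclique partition of $E(K_n)$. Hence $\operatorname{bp}(G)\ge n-1$.

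\emph{Counting $\operatorname{mc}(G^c)$.} In $G^c$ the set $S$ is a clique and $K$ is independent, so a maximal clique of $G^c$ contains at most one vertex of $K$.
\begin{itemize}
\item For each $k\in K$, the set $\{k\}\cup (S\setminus N_G(k))$ is a clique of $G^c$, and it is maximal. These $n$ cliques are pairwise distinct.
\item The only other candidate is $S$ itself. It is maximal exactly when every $k\in K$ has a $G$-neighbour in $S$.
\end{itemize}
So if every vertex of $K$ has a neighbour in $S$, then $\operatorname{mc}(G^c)=n+1$. It then suffices to build such a $G$ with $\operatorname{bp}(G)\le n-1$, which gives $\operatorname{bp}(G)=n-1=\operatorname{mc}(G^c)-2$.

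\emph{Construction.} Start from a Graham--Pollak partition of $K_4$ on $K=\{1,2,3,4\}$ into three bicliques:
\[
(\{1,2\},\{3,4\}),\qquad (\{1\},\{2\}),\qquad (\{3\},\{4\}).
\]
Enlarge the sides of these bicliques with new vertices, each new vertex placed in exactly one side of exactly one biclique:
\[
B_1=(\{1,2\},\{3,4,s_1\}),\quad B_2=(\{1,u\},\{2,s_2\}),\quad B_3=(\{3,t\},\{4,s_3\}).
\]
Let $S=\{s_1,s_2,s_3,t,u\}$ and let $G$ be the union of $B_1,B_2,B_3$.

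\emph{Checks.}
\begin{itemize}
\item Each $S$-vertex lies in only one biclique, and within it only on one side, opposite only to $K$-vertices. So no edge joins two $S$-vertices, and $S$ is independent.
\item The bicliques are edge-disjoint: inside $K$ they are the original partition, and each edge at an $S$-vertex lies in its unique biclique. Hence $\operatorname{bp}(G)\le 3$.
\item The neighbourhoods in $S$ are $N(1)\ni s_1,s_2$, $N(2)\ni s_1,u$, $N(3)\ni s_3$ and $N(4)\ni t$, all nonempty. Hence $\operatorname{mc}(G^c)=5$ and $\operatorname{bp}(G)=3=\operatorname{mc}(G^c)-2$.
\end{itemize}

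The main obstacle is the design step. Every vertex of $K$ must receive an $S$-neighbour, which keeps $S$ maximal in $G^c$. At the same time no new biclique may be added, and $S$ must stay independent. Attaching each new vertex to a single side of an existing biclique handles both constraints at once. The remaining verification, maximality and distinctness of the cliques of $G^c$, is routine.
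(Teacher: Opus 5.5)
There is a genuine gap: your lower bound and your count of $\operatorname{mc}(G^c)$ are correct and match the paper's argument, but your construction does not produce a split graph. In $B_2=(\{1,u\},\{2,s_2\})$ the new vertices $u$ and $s_2$ sit on \emph{opposite} sides, so $us_2$ is an edge of $B_2$. Likewise $ts_3$ is an edge of $B_3$. Your check that every $S$-vertex is ``opposite only to $K$-vertices'' is therefore false, and $S$ is not independent. In fact $\{1,s_2,u,2\}$ induces a $C_4$, since the chords $1u$ and $2s_2$ are absent, so $G$ is not split for any partition.

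This cannot be repaired while keeping your partition of $K_4$. To keep $S$ independent, the $S$-vertices of each $B_i$ must all lie on one side, say the side labelled $\sigma_i\in\{0,1\}$ in the addressing $f$ induced by the partition. Then $k\in K$ has a neighbour in $S$ only if $f(k)_i=1-\sigma_i$ for some $i$, i.e.\ only if $\sigma\notin H(f(k))$; this is the mechanism behind Lemmas~\ref{addressing_lemma_1} and~\ref{volume_lemma_1}. Your partition gives the addresses $00{*},01{*},1{*}0,1{*}1$, which tile $\{0,1\}^3$. So for every $\sigma$ some clique vertex has no $S$-neighbour, $S$ is not maximal in $G^c$, $\operatorname{mc}(G^c)=4$, and you only get $\operatorname{bp}(G)=\operatorname{mc}(G^c)-1$.

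The missing idea is to start from a Graham--Pollak partition whose subcubes do \emph{not} tile the cube. One already exists for $K_4$. The partition $(\{2\},\{1,3\}),(\{4\},\{1,2\}),(\{3\},\{1,4\})$ has addresses $111,01{*},1{*}0,{*}01$ and leaves $000$ uncovered. Putting one new vertex on the first side of each biclique gives the split graph with $s_1\sim 1,3$, $s_2\sim 1,2$, $s_3\sim 1,4$, where $\operatorname{bp}(G)=3$ and $\operatorname{mc}(G^c)=5$. The paper's 14-vertex example works for the same reason. Its six bicliques restrict to a non-tiling partition of $K_7$, with clique addresses of total volume $46<2^6$, and all $S$-vertices lie on the second side.
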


\begin{proof}
Let $G$ be the split graph with adjacency matrix $A$, where the vertices $\{1,\ldots,7\}$ form the clique $K$ and the vertices $\{8,\ldots,14\}$ form the independent set $S$.

    \begin{table}[H]
    \begin{center}
\begin{tabular}{|l|l|l|l|l|l|l|l|l|l|l|l|l|l|l|}
\hline
   & 1 & 2 & 3 & 4 & 5 & 6 & 7 & 8 & 9 & 10 & 11 & 12 & 13 & 14 \\ \hline
1  & 0 & 1 & 1 & 1 & 1 & 1 & 1 & 0 & 1 & 0  & 0  & 0  & 0  & 0  \\ \hline
2  & 1 & 0 & 1 & 1 & 1 & 1 & 1 & 0 & 0 & 1  & 1  & 0  & 0  & 0  \\ \hline
3  & 1 & 1 & 0 & 1 & 1 & 1 & 1 & 1 & 0 & 0  & 0  & 1  & 0  & 0  \\ \hline
4  & 1 & 1 & 1 & 0 & 1 & 1 & 1 & 1 & 0 & 1  & 0  & 0  & 1  & 0  \\ \hline
5  & 1 & 1 & 1 & 1 & 0 & 1 & 1 & 1 & 1 & 0  & 1  & 0  & 1  & 0  \\ \hline
6  & 1 & 1 & 1 & 1 & 1 & 0 & 1 & 1 & 1 & 1  & 0  & 0  & 0  & 1  \\ \hline
7  & 1 & 1 & 1 & 1 & 1 & 1 & 0 & 1 & 1 & 1  & 1  & 1  & 0  & 0  \\ \hline
8  & 0 & 0 & 1 & 1 & 1 & 1 & 1 & 0 & 0 & 0  & 0  & 0  & 0  & 0  \\ \hline
9  & 1 & 0 & 0 & 0 & 1 & 1 & 1 & 0 & 0 & 0  & 0  & 0  & 0  & 0  \\ \hline
10 & 0 & 1 & 0 & 1 & 0 & 1 & 1 & 0 & 0 & 0  & 0  & 0  & 0  & 0  \\ \hline
11 & 0 & 1 & 0 & 0 & 1 & 0 & 1 & 0 & 0 & 0  & 0  & 0  & 0  & 0  \\ \hline
12 & 0 & 0 & 1 & 1 & 0 & 0 & 1 & 0 & 0 & 0  & 0  & 0  & 0  & 0  \\ \hline
13 & 0 & 0 & 0 & 0 & 1 & 0 & 0 & 0 & 0 & 0  & 0  & 0  & 0  & 0  \\ \hline
14 & 0 & 0 & 0 & 0 & 0 & 1 & 0 & 0 & 0 & 0  & 0  & 0  & 0  & 0  \\ \hline
\end{tabular}
\end{center}
\end{table}
\paragraph{}
The graph $G$ contains an induced clique on $7$ vertices. Since $\operatorname{bp}(G)\ge \operatorname{bp}(G')$ for every induced subgraph $G'$ of $G$, the Graham--Pollak theorem implies that $\operatorname{bp}(G)\ge 6$.
Now consider the following collection of bicliques, $\mathcal{B}=\{B_1,\dots,B_6\}$. This forms a biclique partition of $G$.
\begin{align*}
B_1(U_1,V_1) &= \{4,5\}     \cup \{1,6,8,13\} \\
B_2(U_2,V_2) &= \{6\}       \cup \{1,2,3,7,8,9,10,14\} \\
B_3(U_3,V_3) &= \{3,7\}     \cup \{1,5,8,12\} \\
B_4(U_4,V_4) &= \{1,5,7\}   \cup \{2,9\} \\
B_5(U_5,V_5) &= \{2,4,7\}   \cup \{3,10\} \\
B_6(U_6,V_6) &= \{2,5,7\}   \cup \{4,11\}
\end{align*}

Therefore, $\operatorname{bp}(G)=6$. We claim that $\operatorname{mc}(G^c)=8$.
To prove this, note that $K$ is a clique and $S$ is an independent set in $G$. Hence, $S$ is a clique and $K$ is an independent set in $G^c$. Thus, every clique of $G^c$ contains at most one vertex of $K$.
Each vertex $k\in K$ has at least one neighbor in $S$ in $G$. Therefore, $k$ is nonadjacent in $G^c$ to at least one vertex of $S$. No vertex of $K$ can be added to $S$ in $G^c$, so $S$ is a maximal clique of $G^c$.
For each $k\in K$, the set $\{k\}\cup (N_{G^c}(k)\cap S)$ is a clique of $G^c$. It is maximal because no second vertex of $K$ can be added.
Therefore, $G^c$ has the maximal clique $S$. It also has, for each $k\in K$, the maximal clique $\{k\}\cup (N_{G^c}(k)\cap S)$. These are the only maximal cliques. Hence, $\operatorname{mc}(G^c)=8$.
\end{proof}
\paragraph{}
We construct the split graphs in Theorem~\ref{infinitefamily} from carefully chosen tournament matrices. We begin with adjacency matrices of regular or near-regular tournaments and then modify these matrices to obtain the required block structure. This construction limits the rank of the tournament blocks and yields the desired biclique partition. Siewert has studied different types of ranks of the tournament-matrix in Section~4.4 of~\cite{siewert2000biclique}.
\begin{theorem}\label{infinitefamily}
There exists a family of split graphs $G$ such that \(\operatorname{bp}(G) = \operatorname{mc}(G^c) - 2\).
\end{theorem}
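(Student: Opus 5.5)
A natural route is to generalize Lemma~\ref{counterexample} so that its two ingredients become statements for every $n$. We build split graphs $G_n$ with clique part $K=\{1,\dots,n\}$ and independent part $S$ of size $n$. Two things are then needed. First, the Graham--Pollak lower bound (Theorem~\ref{grahampollak}) applied to the induced clique $K$ gives $\operatorname{bp}(G_n)\ge n-1$. Second, the argument at the end of the proof of Lemma~\ref{counterexample} gives $\operatorname{mc}(G_n^c)=n+1$. That argument only uses that every $k\in K$ has at least one neighbour in $S$ and that the sets $N_{G^c}(k)\cap S$ make the cliques $\{k\}\cup(N_{G^c}(k)\cap S)$ maximal. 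Since $S$ is independent, $\{k\}\cup(N_{G^c}(k)\cap S)$ is automatically maximal, so the count $|K|+1$ holds as soon as each $k$ has a neighbour in $S$. The whole task therefore reduces to exhibiting, for infinitely many $n$, a biclique partition of $E(G_n)$ with exactly $n-1$ bicliques.

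For the upper bound, the plan is to use an addressing $f:K\cup S\to\Sigma^{n-1}$ rather than explicit bicliques. Start from a Graham--Pollak-optimal partition of $K_n$, for instance stars or a tournament-based partition. In the tournament version, biclique $B_i$ has one part $\{i\}$ and the other part the out-neighbours of $i$ among later vertices, together with suitable further vertices. Read this as a $\{0,1,\ast\}$ matrix with rows indexed by $K$. Then attach to $B_i$ new vertices of $S$ on the side opposite their intended neighbours. A vertex $s\in S$ receives a string $f(s)$ whose non-joker coordinates each place $s$ on one side of some $B_i$. Two conditions must hold:
\begin{enumerate}
\item any two vertices of $S$ have distance $0$, meaning they never sit on opposite sides of the same biclique;
\item each $k\in K$ has distance $0$ or $1$ from each $s\in S$.
\end{enumerate}
Then $N(s)$ is defined as the set of $k$ at distance exactly $1$. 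A simple way to secure the first condition is to place every $s$ only on the side containing the smaller, "center" part. For example, let $s_i$ occupy only coordinate $i$ on the side of the center $\{i\}$. Then $s_i$ is adjacent exactly to the other part of $B_i$, every edge of $G_n$ is covered exactly once, and we get $n-1$ bicliques plus one extra $S$-vertex to handle.

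The example in Lemma~\ref{counterexample} is richer: its $S$-vertices have neighbourhoods of several sizes. I would generalize it by taking a regular (for $n$ odd) or near-regular (for $n$ even) tournament matrix $T$. We modify $T$ into a block form $\begin{pmatrix} T' & \ast\\ \ast & \ast\end{pmatrix}$ so that the clique edges are partitioned by $n-1$ bicliques, each of which has a side coinciding with a row pattern of $T'$. Each $s\in S$ then copies that pattern.

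The main obstacle is making sure every clique vertex, including the one that is never a center (vertex $n$), gets an $S$-neighbour, while keeping the number of bicliques at $n-1$ and the $S$ side independent. The minimal fix is to let $s_n$ join vertex $n$'s side in a biclique that already contains $n$ in its larger part. We put $s_n$ on the opposite side, say the center side of $B_{n-1}$, giving $s_n$ neighbour $n$ together with possibly others. Every $S$-vertex then sits on a center side, so the independence of $S$ is preserved. I would try this star-based construction first and check that all edges $ks$ are covered exactly once. If it works, the family is trivially infinite, and the tournament refinement is needed only to match the paper's more elaborate matrices.
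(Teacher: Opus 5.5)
Your reduction to "find $n-1$ bicliques in which every clique vertex has an $S$-neighbour" is correct. There is a genuine gap, though: the construction meant to supply those bicliques fails, and you have placed the obstacle at the wrong vertex. Take the stars $B_i$ (centre $i$, leaves $\{i+1,\dots,n\}$) and put $s_i$ on the centre side of $B_i$. Then a clique vertex gets an $S$-neighbour exactly when it is a leaf of some star. Vertex $n$ is a leaf of every star, so it is already adjacent to every $s_i$. It is vertex $1$ that is never a leaf, so it has no neighbour in $S$. Then $S\cup\{1\}$ is independent, $S$ is not a maximal clique of $G^c$, and $\operatorname{mc}(G^c)=n$. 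You get $\operatorname{bp}(G)=\operatorname{mc}(G^c)-1$, which is not a counterexample, and adding $s_n$ to $B_{n-1}$ does nothing for vertex $1$.

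No other choice of sides rescues the standard stars either. The $K$-addresses have volume $1+\sum_{i=1}^{n-1}2^{n-1-i}=2^{n-1}$, so they tile $\{0,1\}^{n-1}$. Hence the point recording the $S$-side of each $B_i$ lies in some $H(f(u))$, and that $u$ is never opposite an $S$-vertex. This is exactly the necessary condition $\operatorname{vol}(F_K)<2^{r}$ of Lemma~\ref{volume_lemma_1}. The missing idea is a \emph{non-tiling} Graham--Pollak partition of $K$. Your tournament paragraph gestures at this but is too vague to supply one, and you treat it as optional.

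Within your own framework the repair is small. Let the domination pattern among the $n-1$ star centres be a tournament with no source (for example, any strongly connected one). The non-centre vertex $z$ is then a leaf of every star, and you keep $s_i$ on the centre side of $B_i$. Every clique vertex is now a leaf somewhere, giving $\operatorname{bp}(G)=n-1=\operatorname{mc}(G^c)-2$ for every $n\ge 4$; with $n=4$ and a directed $3$-cycle this is already a $7$-vertex example.

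This repair is a special case of the paper's construction: the $K$--$S$ biadjacency matrix is a tournament matrix with one zero column and no zero row, and the rectangles are its columns. The paper instead takes a tournament matrix $A_n$ with $\operatorname{rank}_{\mathrm{bin}}(A_n)=n-1$ and no zero row or column. It partitions the ones of $A_n$ into $n-1$ rectangles and extends each by putting $u_j$ on the side of $v_j$. This covers $E(G[K])$ exactly once because $A_n+A_n^{T}=J_n-I_n$.

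Alternatively, you can build the family from Lemma~\ref{counterexample} alone. Repeatedly add a clique vertex $w$ joined to all of $K$, together with a pendant $S$-vertex at $w$. Then $\operatorname{bp}$ rises by exactly one (a star at $w$ for the upper bound, Graham--Pollak for the lower bound), and so does $\operatorname{mc}(G^c)$.
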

\begin{proof}
Consider the block adjacency matrix \(P\) of a balanced split graph \(G=K \cup S\) on \(2n\) vertices. Its rows and columns are indexed by the vertices in \(K\), followed by the vertices in \(S\). The vertices in \(K\) are labeled \(\{u_1,\cdots,u_n\}\), and the vertices in \(S\) are labeled \(\{v_1,\cdots,v_n\}\).
\[
P=
\begin{bmatrix}
    J_n-I_n &A_n\\
    A_n^T &0
\end{bmatrix}
\]
where \(J_n\) denotes the \(n\times n\) all-ones matrix, \(I_n\) denotes the \(n\times n\) identity matrix, and \(A_n\) is the adjacency matrix of a tournament, defined below. Note that \(A_n+A_n^T=J_n-I_n\).
\begingroup
\setlength{\arraycolsep}{3pt}
\[
A_n =
\left[
\begin{array}{ccccccccc}
0 & 1 & 0 & 0 & \cdots & \cdots & 0 & 0 & 0 \\
0 & 0 & 0 & 0 & \cdots & \cdots & 0 & 0 & 1 \\
1 & 1 & 0 & 0 & \cdots & \cdots & 0 & 0 & 1 \\
1 & 1 & 1 &        &        &        &        &        & 1 \\
\vdots & \vdots & \vdots &  & \multicolumn{3}{c}{U_m} &  & \vdots \\
\vdots & \vdots & \vdots &  &        &        &        &  & \vdots \\
1 & 1 & 1 &        &        &        &        &        & 1 \\
1 & 1 & 1 &        &        &        &        &        & 1 \\
1 & 0 & 0 & 0 & \cdots & \cdots & 0 & 0 & 0
\end{array}
\right]
\]
\endgroup

\[
\begin{array}{cc}
U_m=
\left[
\begin{array}{ccccccccc}
0 & 0 & 1 & \cdots & \cdots & 1 & 0 & 1 \\
1 & 0 & 0 & \cdots & \cdots & 0 & 1 & 0 \\
0 & 1 & 0 & \cdots & \cdots & 0 & 0 & 1 \\
\vdots & \vdots & \vdots &        &        & \vdots & \vdots & \vdots \\
\vdots & \vdots & \vdots &        &        & \vdots & \vdots & \vdots \\
0 & 1 & 0 & \cdots & \cdots & 0 & 0 & 1 \\
1 & 0 & 1 & \cdots & \cdots & 1 & 0 & 0 \\
0 & 1 & 0 & \cdots & \cdots & 0 & 1 & 0
\end{array}
\right]
&
U_m=
\left[
\begin{array}{ccccccccc}
0 & 0 & 1 & \cdots & \cdots & 0 & 1 & 0 \\
1 & 0 & 0 & \cdots & \cdots & 1 & 0 & 1 \\
0 & 1 & 0 & \cdots & \cdots & 0 & 1 & 0 \\
\vdots & \vdots & \vdots &        &        & \vdots & \vdots & \vdots \\
\vdots & \vdots & \vdots &        &        & \vdots & \vdots & \vdots \\
1 & 0 & 1 & \cdots & \cdots & 0 & 0 & 1 \\
0 & 1 & 0 & \cdots & \cdots & 1 & 0 & 0 \\
1 & 0 & 1 & \cdots & \cdots & 0 & 1 & 0
\end{array}
\right]
\\[1ex]
\\
\text{for $m$ odd} & \text{for $m$ even}
\end{array}
\]

Here \(U_m\) is the adjacency matrix of a tournament on \(m\) vertices, where \(U_m[i,j]=1\) if and only if \(j-i\) is even and positive or odd and negative, and \(U_m[i,j]=0\) otherwise. The matrix \(U_m\) is regular for odd \(m\) and near-regular for even \(m\). Note that \(A_n\) is a tournament matrix in which every row and every column has at least one \(1\). As noted in~\cite{siewert2000biclique},
\[
\operatorname{rank}_{\mathbb{R}}(A_n)=\operatorname{rank}_{\mathrm{bin}}(A_n)=\operatorname{rank}_t(A_n)=m+3=n-1.
\]

Let \(G[K,S]\) be the bipartite graph induced by the edges between \(K\) and \(S\) of the split graph $G$. Its biadjacency matrix is \(A_n\). Since \(\operatorname{rank}_{\mathrm{bin}}(A_n)=n-1\), the $1$-entries of \(A_n\) admit a partition into \(n-1\) monochromatic rectangles. Equivalently, \(E(G[K,S])\) admits a biclique partition \(\mathcal{B}=\{B_1,\dots,B_{n-1}\}\). Write \(B_i\) as \(B_i(U_i,V_i)\) with \(U_i\subseteq K\) and \(V_i\subseteq S\).

We extend \(\mathcal{B}\) to a biclique partition \(\mathcal{B}'\) of \(E(G)\) as follows. For each \(j\in[n]\), add \(u_j\) to \(V_i\) if and only if \(v_j\in V_i\). Since \(K\) is a clique, this modification preserves the biclique property. It covers all edges between \(K\) and \(S\) exactly once, because \(\mathcal{B}\) already partitions \(E(G[K,S])\). It also covers each edge inside \(K\) exactly once, because an edge \(u_a u_b\in E(G[K])\) is covered in the unique biclique \(B_i'\) for which \(u_a\) and \(v_b\) lie in opposite parts of \(B_i\). Therefore, \(\mathcal{B}'\) is a biclique partition of \(E(G)\) of size \(n-1\).

Since every vertex of \(K\) has a neighbor in \(S\) in $G$, each \(u\in K\) is nonadjacent in \(G^c\) to at least one vertex of \(S\). Thus, \(S\) is a maximal clique in \(G^c\). Also, \(K\) is an independent set in \(G^c\), so the remaining maximal cliques are $\{u_j\}\cup (N_{G^c}(u_j)\cap S)$ for \(u_j\in K\). Therefore, \(\operatorname{mc}(G^c)=n+1\), and hence \(\operatorname{bp}(G)=n-1=\operatorname{mc}(G^c)-2\).
\end{proof}

\paragraph{}
In what follows, Lemma~\ref{unbalanced_lemma} determines the biclique partition number for unbalanced split graphs, and in particular shows that the Lyu--Hicks equality \(\operatorname{bp}(G)=\operatorname{mc}(G^c)-1\) holds for this subclass of split graphs. 
\begin{lemma}\label{unbalanced_lemma}
The biclique partition number $\operatorname{bp}(G)$ of an unbalanced split graph $G$ is $\omega(G)-1$.
\end{lemma}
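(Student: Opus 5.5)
Lower bound: \(G\) contains a clique on \(\omega(G)\) vertices, and biclique partitions restrict to induced subgraphs. By Theorem~\ref{grahampollak}, \(\operatorname{bp}(G)\ge \omega(G)-1\). All the work is in the matching upper bound, which I would get by an explicit construction driven by Theorem~\ref{theorem1}.

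Since \(G\) is unbalanced, I will choose a split partition \(V(G)=K\cup S\) that is \(K\)-max, so \(|K|=\omega(G)\) and \(|S|=\alpha(G)-1\). Theorem~\ref{theorem1}(3) then gives a vertex \(k_0\in K\) with \(S\cup\{k_0\}\) independent, i.e.\ \(k_0\) has no neighbour in \(S\). Write \(K=\{k_0,k_1,\dots,k_{t}\}\) with \(t=\omega(G)-1\).

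For \(i=1,\dots,t\), define the star \(B_i\) with center \(k_i\) and leaf set
\[
\{k_j : j<i\}\cup (N_G(k_i)\cap S).
\]
The index \(j=0\) is included, so \(k_0\) is a leaf of every \(B_i\). Each \(B_i\) is a star, hence a biclique.

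Now I check that \(\{B_1,\dots,B_t\}\) partitions \(E(G)\).
\begin{itemize}
\item An edge \(k_ak_b\) with \(0\le a<b\) lies only in \(B_b\).
\item An edge \(k_is\) with \(s\in S\) and \(i\ge 1\) lies only in \(B_i\), because \(S\) is independent and \(s\) is a leaf only of stars centred at its neighbours.
\item There are no edges \(k_0s\), by the choice of \(k_0\), and no edges inside \(S\).
\end{itemize}
So every edge is covered exactly once, giving \(\operatorname{bp}(G)\le t=\omega(G)-1\).

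The only delicate point is making sure we are allowed to pick the \(K\)-max partition. An unbalanced split graph admits split partitions of both types (2) and (3) of Theorem~\ref{theorem1}. If we are handed an \(S\)-max partition, the vertex \(s\in S\) with \(K\cup\{s\}\) a clique lets us move \(s\) into \(K\). This yields \(K'=K\cup\{s\}\) and \(S'=S\setminus\{s\}\), a partition with \(|K'|=\omega(G)\). Because \(G\) is unbalanced, case (1) is excluded for it, so it must be case (3), which supplies the vertex \(k_0\). With that in place, the two bounds match and \(\operatorname{bp}(G)=\omega(G)-1\).
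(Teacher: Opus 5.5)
Your proof is correct and is essentially the paper's argument: the paper takes an $S$-max split partition with $|K|=\omega(G)-1$ and uses the $\omega(G)-1$ stars centred at the clique vertices. That is exactly your construction once $k_0$ is moved into $S$, i.e.\ using the partition $(K\setminus\{k_0\},\,S\cup\{k_0\})$. Your explicit leaf sets and your justification that a suitable split partition exists are slightly more careful than the paper's greedy description, which takes the $S$-max partition's existence for granted.
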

\begin{proof}
We establish equality by proving both bounds. First, we consider a split partition $V(G) = K \cup S$ with $|K| = \omega(G) - 1$ and $|S| = \alpha(G)$. Since $G$ contains an induced clique of size $\omega(G)$ and the biclique partition number of any graph is at least that of its induced subgraphs, we have $\operatorname{bp}(G) \geq \operatorname{bp}(K_{\omega(G)})$. By Theorem \ref{grahampollak}, $\operatorname{bp}(K_{\omega(G)}) = \omega(G) - 1$; thus, $\operatorname{bp}(G) \geq \omega(G) - 1$.

For the upper bound, we construct an explicit partition. For each vertex $v \in K$, let $B_v$ be the star centered at $v$ covering all edges incident to $v$ in the current graph (i.e., the graph after removing edges covered by previous stars). This yields $\omega(G) - 1$ bicliques, because $|K| = \omega(G) - 1$. Each edge $uv$ is covered exactly once when processing the first endpoint in $K$ encountered in the ordering. Edges within $K$ are covered when the first endpoint is processed, whereas the edges between $K$ and $S$ are covered when the vertex in $K$ is processed. Thus $\operatorname{bp}(G) \leq \omega(G) - 1$.
Combining the bounds gives $\operatorname{bp}(G) = \omega(G) - 1$.
\end{proof}

Moreover, for an unbalanced split graph we have \(\omega(G)-1=\operatorname{mc}(G^c)-1\): indeed, in the complement \(G^c\), the set \(S\) induces a clique and \(K\) induces an independent set, so every maximal clique of \(G^c\) contains at most one vertex of \(K\). Since (in the \(S\)-max split partition) \(|K|=\omega(G)-1\), the maximal cliques of \(G^c\) are \(S\) together with \(|K|\) cliques of the form \(\{k\}\cup (N_{G^c}(k)\cap S)\) for \(k\in K\). Hence \(\operatorname{mc}(G^c)=|K|+1=\omega(G)\), and therefore \(\operatorname{mc}(G^c)-1=\omega(G)-1\).
\paragraph{}
We now focus on balanced split graphs. In the next lemmas (Lemmas~\ref{structure1_lemma}, \ref{structure2_lemma}, \ref{addressing_lemma_1}, and \ref{volume_lemma_1}), we derive structural restrictions on biclique partitions of size at most \(\omega(G)-1\) and translate these restrictions into properties of the induced addressings.

\begin{lemma}\label{structure1_lemma}
Let $G$ be a balanced split graph with vertex partition $V(G) = K \cup S$, where $K$ is a clique, and $S$ is an independent set. If $\mathcal{B} = \{B_1, \dots, B_r\}$ is a biclique partition of $E(G)$ with $r \leq \omega(G) - 1$, then no $B_i \in \mathcal{B}$ is a star centered at a vertex in $S$.
\end{lemma}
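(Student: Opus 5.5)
The plan is to argue by contradiction, using only the fact that in a balanced split graph the clique side of the split partition is a maximum clique, together with Theorem~\ref{grahampollak}. Since $G$ is balanced, the partition $V(G)=K\cup S$ may be taken to satisfy $|K|=\omega(G)$. (In the $S$-max unbalanced case one would only have $|K|=\omega(G)-1$, and the argument below would give nothing.)

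Suppose, for contradiction, that some $B_i\in\mathcal{B}$ is a star centered at a vertex $s\in S$. Every edge of a star is incident to its center, so every edge of $B_i$ is incident to $s$. Since $s\notin K$, no edge of $G[K]$ lies in $B_i$. The edges of the clique $G[K]$ are therefore partitioned by the remaining $r-1$ bicliques $B_j$, $j\neq i$.

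Next, restrict each $B_j(U_j,V_j)$ to $K$, i.e.\ pass to $B_j(U_j\cap K, V_j\cap K)$. This is again a biclique, possibly with no edges, and it contains exactly the edges of $B_j$ with both endpoints in $K$. Discarding the empty ones, we obtain a biclique partition of $E(K_{\omega(G)})$ with at most
\[
r-1\le \omega(G)-2
\]
members. This contradicts Theorem~\ref{grahampollak}, which requires at least $\omega(G)-1$ bicliques. Hence no $B_i$ is a star centered in $S$.

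The only point needing care is to fix the split partition with $|K|=\omega(G)$; this is exactly where balancedness enters. There is no real computational obstacle: the whole argument is one edge-counting observation about stars centered in $S$ followed by an application of Graham--Pollak.
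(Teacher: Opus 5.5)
Your proof is correct and is essentially the paper's argument: a star centered at a vertex of $S$ contains no edge of $G[K]$, so the remaining $r-1\le\omega(G)-2$ bicliques would partition $E(K_{\omega(G)})$, contradicting Graham--Pollak. The paper deletes $E(B_k)$ and invokes monotonicity of $\operatorname{bp}$ under induced subgraphs, which is exactly your restriction-to-$K$ step. Your remark that balancedness is what guarantees $|K|=\omega(G)$ makes explicit a point the paper leaves implicit.
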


\begin{proof}
Suppose to the contrary that some $B_k \in \mathcal{B}$ is a star centered at $v \in S$. Let $E_k$ be the edge set of $B_k$. Consider the subgraph $G' = (V(G), E(G) \setminus E_k)$. Since $G$ contains a clique of size $\omega(G)$ and removing edges incident to $v \in S$ does not remove edges within $K$ (as $S$ is independent), $G'$ still contains a clique of size $\omega(G)$. 

By Theorem \ref{grahampollak}, any biclique partition of $K_{\omega(G)}$ requires at least $\omega(G) - 1$ bicliques. Since $K_{\omega(G)}$ is an induced subgraph of $G'$, we have $\operatorname{bp}(G') \geq \operatorname{bp}(K_{\omega(G)}) = \omega(G) - 1$. However, $\mathcal{B} \setminus \{B_k\}$ partitions $E(G')$ with $r-1 \leq \omega(G)-2$ bicliques, which contradicts $\operatorname{bp}(G') \geq \omega(G) - 1$.
\end{proof}

\begin{lemma}\label{structure2_lemma}
Let $G$ be a balanced split graph with vertex partition $V(G) = K \cup S$, where $K$ is a clique and $S$ is an independent set. If $\mathcal{B} = \{B_1, \dots, B_r\}$ is a biclique partition of $E(G)$ with $r \leq \omega(G) - 1$, then no $B_i \in \mathcal{B}$ has a part entirely contained in $S$.
\end{lemma}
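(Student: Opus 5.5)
Suppose to the contrary that some $B_k = B_k(U_k,V_k)$ has a part, say $V_k$, entirely contained in $S$. We will generalize the argument of Lemma~\ref{structure1_lemma}, which is the special case $|U_k|=1$, $U_k\subseteq S$ impossible (stars centered in $S$). First, since $S$ is independent, every edge of $B_k$ joins a vertex of $U_k$ to a vertex of $V_k\subseteq S$, and no edge inside $S$ exists. Hence every edge of $B_k$ is incident to a vertex of $S$, and in particular $E(B_k)$ contains no edge with both endpoints in $K$. We will also allow $U_k$ to contain vertices of $S$ a priori, but these would give edges inside $S$. So $U_k\subseteq K$ whenever $B_k$ has at least one edge, and $E(B_k)\subseteq E(G[K,S])$.

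Next, consider $G' = (V(G), E(G)\setminus E(B_k))$. Because $E(B_k)$ contains no edge of $G[K]$, the set $K$ still induces a clique in $G'$. Since $G$ is balanced, $|K| = \omega(G)$ by Theorem~\ref{theorem1}. Thus $K_{\omega(G)}$ is an induced subgraph of $G'$. Since biclique partition number is monotone under taking induced subgraphs, Theorem~\ref{grahampollak} gives $\operatorname{bp}(G')\ge \omega(G)-1$. On the other hand, $\mathcal{B}\setminus\{B_k\}$ is a biclique partition of $E(G')$ of size $r-1\le \omega(G)-2$. This is a contradiction.

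The only point requiring care, and the one I expect to be the main subtlety, is justifying that the clique $K$ used has size exactly $\omega(G)$. This holds because in a balanced split graph the split partition in the hypothesis may be taken with $|K|=\omega(G)$. If the statement is meant for an arbitrary split partition, one instead takes a maximum clique $Q$ of $G$. Any clique meets $S$ in at most one vertex, and an edge of $B_k$ with both ends in $Q$ would be of the form $ks$ with $s\in Q\cap S$. We must show that removing $E(B_k)$ still leaves an induced clique of size $\omega(G)$, or at least that $\operatorname{bp}$ of the clique part drops by at most one. Here I would use the balanced case of Theorem~\ref{theorem1} to select $Q=K$, so that no edge of $B_k$ lies inside $Q$. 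The remainder of the argument is routine.
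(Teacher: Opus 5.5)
Your proof is correct and is essentially the paper's argument: a part inside $S$ forces $E(B_k)\subseteq E(G[K,S])$, so deleting $B_k$ leaves the $\omega(G)$-clique $K$ intact, and Graham--Pollak then forces $r-1\ge\omega(G)-1$, a contradiction. The hedging in your last paragraph is unnecessary, and your explicit appeal to $|K|=\omega(G)$ is more careful than the paper, which uses this silently: case~(1) of Theorem~\ref{theorem1} gives $\omega(G)+\alpha(G)=|V(G)|$ while cases~(2) and~(3) give $|V(G)|+1$, so every split partition of a balanced graph has $|K|=\omega(G)$.
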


\begin{proof}
Suppose, to the contrary, that some $B_k \in \mathcal{B}$ has a part $A \subseteq S$. Since $S$ is independent, every edge of $B_k$ has one endpoint in $A$ and the other in $K$. Let $E_k$ be the edge set of $B_k$, and consider the graph $G'=(V(G), E(G)\setminus E_k)$.

The removal of $E_k$ only affects the edges between $S$ and $K$, leaving the clique $K$ completely intact. Therefore, $G'$ contains an induced clique of size $\omega(G)$. By Theorem \ref{grahampollak}, $\operatorname{bp}(K_{\omega(G)}) = \omega(G) - 1$, and thus, $\operatorname{bp}(G') \geq \omega(G) - 1$. However, $\mathcal{B} \setminus \{B_k\}$ partitions $E(G')$ with $r-1 \leq \omega(G) - 2$ bicliques, which contradicts $\operatorname{bp}(G') \geq \omega(G) - 1$.
\end{proof}

\begin{lemma}\label{addressing_lemma_1}
Let $G$ be a balanced split graph with split partition $V(G) = K \cup S$ and let $\mathcal{B} = \{B_1, \dots, B_r\}$ be a biclique partition of $E(G)$ with $r \leq \omega(G)-1$. For the addressing $f \colon V(G) \to \Sigma^r$ induced by $\mathcal{B}$ (where $\Sigma = \{0,1,*\}$), every vertex $u \in K$ has at least one coordinate of $f(u)$ equal to $0$.
\end{lemma}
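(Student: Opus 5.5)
The plan is a proof by contradiction using the Tverberg--Peck linear-algebra proof of Theorem~\ref{grahampollak}, restricted to the clique $K$. First I fix the orientation convention implicit in the addressing: for each $B_i=B_i(U_i,V_i)$, coordinate $i$ of $f(v)$ is $0$ if $v\in U_i$, $1$ if $v\in V_i$, and $\ast$ otherwise. By Lemma~\ref{structure2_lemma}, neither $U_i$ nor $V_i$ lies entirely in $S$, so both parts of every $B_i$ meet $K$; in particular every coordinate takes the value $0$ at some vertex of $K$. Suppose now that some $u\in K$ has $f(u)\in\{1,\ast\}^r$, i.e.\ $u$ lies only in second parts $V_i$.

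The main step restricts $\mathcal{B}$ to $K$. Since $G[K]$ is induced and $|K|=\omega(G)$, the sets $(U_i\cap K,\,V_i\cap K)$ partition $E(K_{\omega})$ into at most $r\le\omega-1$ bicliques; by Theorem~\ref{grahampollak} this count is exactly $\omega-1$ and the restriction is optimal. Introduce variables $x_v$ for $v\in K$ and set $L_i=\sum_{v\in U_i\cap K}x_v$ and $R_i=\sum_{v\in V_i\cap K}x_v$. Then
\[
\sum_{\{a,b\}\subseteq K}x_ax_b=\sum_{i=1}^{r}L_iR_i ,
\qquad\text{hence}\qquad
\Bigl(\sum_{v\in K}x_v\Bigr)^2-\sum_{v\in K}x_v^2=2\sum_{i=1}^{r}L_iR_i .
\]
The hypothesis says that $x_u$ never occurs in any $L_i$. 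Impose the $r$ homogeneous equations $L_i=0$; they involve only the $\omega-1$ variables $x_v$ with $v\ne u$, and $x_u$ remains free. The aim is a nonzero real vector $x$ with all $L_i=0$ and $\sum_{v}x_v=0$. Substituting into the identity then gives $\sum_v x_v^2=0$, so $x=0$, a contradiction.

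The obstacle is the dimension count. There are $r+1\le\omega$ homogeneous equations in $\omega$ unknowns, which is just enough for the standard Graham--Pollak argument but does not by itself use the assumption on $u$. The saving should come from the free variable: on the solution space of $\{L_i=0\}_{i=1}^r$, the coordinate $x_u$ is unconstrained, so that space has dimension at least $1+\bigl((\omega-1)-\operatorname{rank}(L)\bigr)$. I would try to show $\operatorname{rank}(L_1,\dots,L_r)\le\omega-2$ on the variables other than $u$. Failing that, I would combine a solution having $x_u\ne0$ with the freedom to add multiples of $e_u$, so that $\sum x_v=0$ can be met without forcing $x=0$. If the pure counting is too tight when $r=\omega-1$, the fallback is to exploit optimality. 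Since the restriction to $K$ is a tight Graham--Pollak partition, the string families $f(K)$ have $d(f(a),f(b))=1$ for all $a\ne b$, so the subcubes $H(f(v))$ for $v\in K$ are pairwise disjoint. I would then use the known tiling (volume $2^{r}$) property of tight families to show that the vertex whose subcube contains $1\cdots1$ must nonetheless carry a $0$ coordinate. This would use the fact, from Lemma~\ref{structure2_lemma}, that each coordinate has a $0$ in $K$, and would contradict $f(u)\in\{1,\ast\}^r$.
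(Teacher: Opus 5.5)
Your proposal has a genuine gap: it only uses data inside $K$, and that data cannot force the conclusion. Specifically, you use that $\mathcal{B}$ restricted to $K$ partitions $E(K_\omega)$ into $\omega-1$ bicliques with both sides nonempty, and that $u$ lies in no $U_i$. The star partition of $K_\omega$ satisfies all of this: $B_i$ has first part $\{v_i\}$ and second part $\{v_{i+1},\dots,v_\omega\}$ for $i=1,\dots,\omega-1$, and $v_\omega$ lies in no first part, so $f(v_\omega)=1^{\omega-1}$. In this example $L_i=x_{v_i}$. So the $L_i$ have rank $\omega-1$ on the variables other than $u=v_\omega$, and the solution space of $\{L_i=0\}$ is spanned by $e_u$. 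Then $\sum_v x_v=0$ forces $x=0$, so neither your rank bound nor adding multiples of $e_u$ can give a contradiction.

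Your fallback also fails. $1$-neighborly families of $r+1$ strings in $\Sigma^r$ need not tile $\{0,1\}^r$. For example, $0{\ast}0,\ {\ast}01,\ 11{\ast},\ 011$ are pairwise at distance $1$ but have volume $7<8$. In this very setting, the seven $K$-strings of Lemma~\ref{counterexample} have volume $46<64$. Moreover, a string whose subcube contains $1\cdots1$ has no $0$ coordinate by definition, so tiling would refute the lemma rather than prove it.

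You also skipped the orientation. With an arbitrary choice of which side of $B_i$ is $U_i$, the statement is false, because swapping the sides of $B_j$ exchanges $0$ and $1$ in coordinate $j$. For instance, in Lemma~\ref{counterexample} vertex $6$ has $f(6)=10{\ast}{\ast}{\ast}{\ast}$, which becomes $11{\ast}{\ast}{\ast}{\ast}$ after swapping the sides of $B_2$.

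The missing idea is to use $S$. Since $S$ is independent, the $S$-vertices of each $B_i$ all lie on one side. Orient every $B_i$ so that this is the second side; then $f(v)\in\{1,\ast\}^r$ for all $v\in S$. Balancedness gives each $u\in K$ a neighbour $v\in S$, since otherwise $S\cup\{u\}$ would be independent of size $\alpha(G)+1$. The biclique covering $uv$ has $v$ in its second part, so it must contain $u$ in its first part. This short argument is the paper's proof, and it needs neither Graham--Pollak nor the bound $r\le\omega(G)-1$.
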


\begin{proof}
Since $S$ is an independent set, no biclique $B_i \in \mathcal{B}$ contains two vertices from $S$ in different parts, as this implies an edge within $S$. Therefore, without loss of generality, all vertices of $S$ appearing in any $B_i$ are assumed to be in the second part. Consequently, for any $v \in S$, we have $f(v) \in \{1,*\}^r$.

We now consider $u \in K$. Since $G$ is balanced, $u$ must have at least one neighbor $v \in S$; otherwise, $\{u\} \cup S$ forms an independent set of size $\alpha(G)+1$, contradicting $|S| = \alpha(G)$ and $u$ has at least one non-neighbor $w \in S$; otherwise $u$ would be adjacent to all $S$, making $K \cup \{u\}$ a larger clique, contradicting $|K| = \omega(G)$. The edge $uv$ is covered by some biclique $B_j$. Because $v$ is assigned to the second part of $B_j$, $u$ must be in the first part to cover $uv$. Thus, the $j$th coordinate of $f(u)$ is $0$.
\end{proof}

\begin{lemma}\label{volume_lemma_1}
Let $G$ be a balanced split graph with a split partition $V(G) = K \cup S$. Suppose $E(G)$ admits a biclique partition $\mathcal{B} = \{B_1, \dots, B_r\}$ with $r \leq \omega(G)-1$. For the addressing $f \colon V(G) \to \Sigma^r$ (where $\Sigma = \{0,1,*\}$) induced by $\mathcal{B}$, let $F_K = \{f(u) \mid u \in K\}$ be the strings assigned to the clique vertices. Then, $F_K$ is $1$-neighborly, and $\operatorname{vol}(F_K) < 2^r$.
\end{lemma}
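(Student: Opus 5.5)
The plan has two parts: first show that $F_K$ is $1$-neighborly, then bound its volume by a disjointness argument combined with the previous lemma.

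\textbf{$F_K$ is $1$-neighborly.} Since $K$ is a clique, every pair $u,u'\in K$ is an edge of $G$. Because $\mathcal{B}$ partitions $E(G)$, the edge $uu'$ lies in exactly one biclique $B_i$, with $u$ and $u'$ in opposite parts of $B_i$. For every other biclique $B_j$, the vertices $u$ and $u'$ are not in opposite parts. Otherwise $uu'$ would also be an edge of $B_j$, and so it would be covered twice. So $f(u)$ and $f(u')$ carry $0$ and $1$ in opposite order in exactly one coordinate, that is, $d(f(u),f(u'))=1$. Distinct vertices of $K$ therefore receive distinct strings, and $F_K$ is a family with pairwise distance exactly $1$, i.e.\ $1$-neighborly.

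\textbf{Volume bound.} Whenever $d(x,y)\ge 1$, the subcubes $H(x)$ and $H(y)$ are disjoint. Hence the subcubes $H(f(u))$, $u\in K$, are pairwise disjoint subsets of $\{0,1\}^r$, and $\operatorname{vol}(F_K)=\bigl|\bigcup_{u\in K}H(f(u))\bigr|\le 2^r$.

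For strictness we need one binary string of length $r$ outside every $H(f(u))$. As in the proof of Lemma~\ref{addressing_lemma_1}, we normalise the orientation of each biclique so that every vertex of $S$ appearing in it lies in the second part. By Lemma~\ref{addressing_lemma_1}, every $f(u)$ with $u\in K$ then has a coordinate equal to $0$. The all-ones string $\mathbf{1}=1^r$ is not in $H(f(u))$, since that coordinate cannot be replaced by $1$. Hence $\mathbf{1}\notin\bigcup_{u\in K}H(f(u))$, and so $\operatorname{vol}(F_K)\le 2^r-1<2^r$.

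\textbf{Main obstacle.} The only delicate point is the orientation convention. Lemma~\ref{addressing_lemma_1} is stated for the addressing induced by $\mathcal{B}$ after swapping parts so that $S$-vertices always sit in the second part. Swapping the two parts of a biclique only exchanges $0\leftrightarrow1$ in that coordinate for every vertex. This preserves all distances and all dimensions $j(x)$, so it preserves both $1$-neighborliness and the volume. We can therefore assume this normalised addressing throughout.

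A degenerate case also needs a remark. If some biclique contains no vertex of $S$, its orientation is arbitrary, but that does not affect the argument, since the $0$-coordinate supplied by Lemma~\ref{addressing_lemma_1} comes from an edge to $S$. If $r=0$, then $\omega(G)\le1$ forces $|K|\le 1$. But a balanced split graph has each $u\in K$ with both a neighbour and a non-neighbour in $S$, so at least one edge exists and $r\ge1$. In any case, Lemma~\ref{addressing_lemma_1} already guarantees a $0$-coordinate, which forces $r\ge1$.
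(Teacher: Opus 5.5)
Your proof is correct and follows the same route as the paper. $1$-neighborliness comes from each edge of $K$ lying in exactly one biclique, and the strict bound comes from Lemma~\ref{addressing_lemma_1}, which excludes $1^r$ from every subcube $H(f(u))$; you are also slightly more careful than the paper, since you justify $\operatorname{vol}(F_K)=\bigl|\bigcup_{u\in K}H(f(u))\bigr|$ through pairwise disjointness of the subcubes, check that the orientation normalisation behind Lemma~\ref{addressing_lemma_1} preserves distances and volume, and rightly drop the paper's unneeded appeal to Lemmas~\ref{structure1_lemma} and~\ref{structure2_lemma}.
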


\begin{proof}
Consider the induced subgraph, $G[K] \cong K_{\omega(G)}$. The biclique partition $\mathcal{B}$ restricted to the vertices of $K$ partitions $E(G[K])$. By Lemmas \ref{structure1_lemma} and \ref{structure2_lemma}, the size of $\mathcal{B}$ remains $r$ when restricted to $K$. For any two distinct vertices $u,v \in K$, the edge $uv$ is covered by exactly one biclique $B_j \in \mathcal{B}$. In the addressing $f$ induced by $\mathcal{B}$, this implies $f(u)$ and $f(v)$ differ in exactly one coordinate, specifically at position $j$, where one has $0$ and the other has $1$. Therefore, $d(f(u), f(v)) = 1$ for all distinct $u,v \in K$, making $F_K$ $1$-neighborly.

By Lemma \ref{addressing_lemma_1}, every $u \in K$ has at least one coordinate of $f(u)$ equal to $0$. Thus, the string $1^r$ is not contained in $\bigcup_{u \in K} H(f(u))$, as each subcube $H(f(u))$ requires at least one coordinate fixed to $0$. Since $\{0,1\}^r$ has $2^r$ vertices and $1^r$ is uncovered:
\[
\operatorname{vol}(F_K) = \left| \bigcup_{u \in K} H(f(u)) \right| \leq 2^r - 1 < 2^r.
\]
\end{proof}

\paragraph{}
In Lemma~\ref{balanced_lemma_2}, we provide an explicit construction that yields an upper bound on the minimum size of a biclique partition of a balanced split graph.

    \begin{lemma} \label{balanced_lemma_2}
        The minimum size of a biclique partition of the edge set of the balanced split graph $G$ has size at most $\omega(G)$.
    \end{lemma}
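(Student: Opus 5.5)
The plan is to mimic the star construction from the proof of Lemma~\ref{unbalanced_lemma}, now applied to a balanced split graph. Fix the (unique) split partition $V(G)=K\cup S$ with $|K|=\omega(G)$ and $|S|=\alpha(G)$. Order the vertices of $K$ arbitrarily as $u_1,\dots,u_{\omega(G)}$.

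For each $i$, let $B_i$ be the star centered at $u_i$ whose other part consists of every vertex adjacent to $u_i$ through an edge not covered by $B_1,\dots,B_{i-1}$. Concretely, the leaves of $B_i$ are $\{u_j : j>i\}\cup (N_G(u_i)\cap S)$. This is a star, hence a biclique, and its leaves form a nonempty set whenever $u_i$ still has an uncovered edge. If the leaf set is empty we simply discard $B_i$, which only helps the bound.

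Next I will check that $\{B_1,\dots,B_{\omega(G)}\}$ partitions $E(G)$. There are two kinds of edges. An edge $u_iu_j$ with $i<j$ lies inside $K$; it belongs to $B_i$ and to no other star. The only other star centered at an endpoint is $B_j$, and $B_j$ excludes earlier clique vertices. An edge $u_iv$ with $v\in S$ belongs only to $B_i$, since no star is centered in $S$. Because $S$ is independent, there are no edges inside $S$. So every edge is covered exactly once, using at most $|K|=\omega(G)$ bicliques, which gives the bound.

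There is no real obstacle here. The only point needing care is that the final star $B_{\omega(G)}$ cannot in general be dropped: unlike the unbalanced case, $u_{\omega(G)}$ may still have neighbors in $S$, so its star is genuinely needed. This is exactly why the bound is $\omega(G)$ rather than $\omega(G)-1$. Optionally, one can note that $B_{\omega(G)}$ is a star whose leaves all lie in $S$. By Lemma~\ref{structure2_lemma} this is consistent with (and explains) why this particular construction cannot beat $\omega(G)$.
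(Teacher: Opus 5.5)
Your proposal is correct and is essentially the paper's proof: successive stars centered at the vertices of $K$, each taking every edge at its center not already covered, partition $E(G)$ into $|K|=\omega(G)$ bicliques. One small sharpening of your side remark: in a balanced split graph every vertex of $K$ has a neighbor in $S$ (otherwise $S$ could be enlarged), so the last star $B_{\omega(G)}$ is always nonempty, not merely possibly.
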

\begin{proof}
A biclique partition of size $\omega(G)$ is constructed by successively removing each vertex of $K$ and its incident edges, and forming the star biclique centered at that vertex at each step. Repeating this process for every vertex in $K$ yields exactly $\omega(G)$ star bicliques. This forms a biclique partition of $G$ of size $\omega(G)$.
\end{proof}

\paragraph{}
In the next lemma, we address an open problem posed by Siewert~\cite{siewert2000biclique}.
\begin{theorem}\label{siewert_open_problem}
There exists a singular tournament on \(n\) vertices with \(r_{\mathbb{Z}^+}(A)=n\), where \(A\) is the adjacency matrix of the tournament.
\end{theorem}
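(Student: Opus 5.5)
The plan is to give an explicit construction and certify its two properties separately. Singularity will come from an explicit nonzero real null vector of $A$. The statement $r_{\mathbb{Z}^+}(A)=\operatorname{rank}_{\mathrm{bin}}(A)=n$ will come from a combinatorial lower bound. The upper bound $r_{\mathbb{Z}^+}(A)\le n$ is trivial: the $1$-entries of each row form a rectangle, so $n$ rectangles always suffice. For the lower bound I will use a \emph{fooling set}. This is a set of $n$ one-entries $(i_1,j_1),\dots,(i_n,j_n)$ of $A$ such that, for every $s\ne t$, at least one of $A_{i_s j_t}$ and $A_{i_t j_s}$ equals $0$. No monochromatic rectangle of $1$'s can contain two such entries, so any partition of the ones of $A$ into rectangles has at least $n$ parts. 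Since binary rank equals the nonnegative integer rank for $\{0,1\}$ matrices (as noted in the preliminaries), this gives $r_{\mathbb{Z}^+}(A)\ge n$.

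I will look for the fooling set in the form $\{(i,\sigma(i)) : i\in[n]\}$ for a permutation $\sigma$. This requires $A_{i\sigma(i)}=1$ for all $i$ and $A_{i\sigma(k)}A_{k\sigma(i)}=0$ for all $i\ne k$. The tournament condition $A+A^T=J_n-I_n$ will be the main tool for forcing zeros. If $\sigma(k)$ beats $i$, then $A_{i\sigma(k)}=0$ automatically. So it suffices to orient the arcs so that, for each pair $\{i,k\}$, either $\sigma(k)\to i$ or $\sigma(i)\to k$. Natural candidates are $\sigma$ an $n$-cycle or a product of transpositions on a near-regular tournament. I will first test small cases, say $n=4$ to $7$, by hand or by a short search, and then try to find a pattern that generalizes. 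The generalization could use a block construction: glue a small singular tournament to a transitive-like part. A nonzero null vector is inherited if the connecting arcs are chosen so that the extra rows are orthogonal to it. The fooling set also extends, because the connecting block can be made mostly $0$ on one side.

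For singularity, I will aim for a null vector of a simple shape. One option is $x=e_a-e_b+e_c-e_d$, meaning some sums of pairs of columns agree. Another is to make two rows differ by something that is a combination of others. For instance, if columns $a,b,c,d$ satisfy $A_{\cdot a}+A_{\cdot c}=A_{\cdot b}+A_{\cdot d}$, the matrix is singular. This is a finite list of arc constraints, which I will impose together with the fooling-set constraints.

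The hard step is reconciling the two requirements. A fooling set of size $n$ forces $A$ to look ``full rank-like'', while singularity forces a linear dependency. Over the reals these are compatible, because fooling sets bound binary rank and not real rank, but the tournament constraint is rigid. I would first run a computer-free case analysis for $n=5$ or $6$, and failing that, directly produce a concrete matrix (for example $n=6$, near-regular) that simultaneously satisfies the column-sum identity and the fooling-set conditions. Checking it is then a routine finite verification.
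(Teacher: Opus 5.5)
\textbf{Verdict: genuine gap.} The proposal is a search plan rather than a proof, and the lower-bound certificate it relies on can fail.

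Your framework is sound as far as it goes. An explicit null vector gives singularity, $r_{\mathbb{Z}^+}(A)\le n$ is trivial, and a fooling set of $n$ one-entries would give $r_{\mathbb{Z}^+}(A)=\operatorname{rank}_{\mathrm{bin}}(A)\ge n$. However, the statement is purely existential, and its whole content is a concrete tournament, which you never produce.

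Everything after your first paragraph is a plan: try $n=4,\dots,7$, try a near-regular $6\times 6$ matrix, glue blocks. The step you yourself call hard is left unresolved: making singularity coexist with full binary rank under the rigid constraint $A+A^{T}=J-I$. Nothing you write shows that any $n$ works, let alone $n=5$ or $6$. The paper's witness has $n=9$ with null vector $(1,1,1,1,1,1,-1,-1,-1)^{T}$, and its lower bound $r_{\mathbb{Z}^+}(A)=9$ is certified by an exhaustive ILP computation, not by a combinatorial argument.

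There is also a concrete risk in the certificate you chose. A fooling set of size $n$ is strictly stronger than $\operatorname{rank}_{\mathrm{bin}}(A)=n$, and the paper's own example does not have one:
\begin{itemize}
\item Rows $7,8,9$ have supports $\{1,8\}$, $\{2,9\}$, $\{3,7\}$.
\item Each of the entries $(7,8)$, $(8,9)$, $(9,7)$ forms an all-ones rectangle with every other one-entry of column $1$, $2$, $3$ respectively. This forces $(7,1)$, $(8,2)$, $(9,3)$ into any fooling set of size $9$.
\item These three entries restrict columns $7,8,9$ to rows $\{3,4\}$, $\{1,5\}$, $\{2,6\}$. Columns $4,5,6$ have supports $\{3,6\}$, $\{1,4\}$, $\{2,5\}$.
\item So there are exactly two ways to match rows $1,\dots,6$ to columns $4,\dots,9$. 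They contain the all-ones rectangles $\{5,6\}\times\{8,9\}$ and $\{1,4\}\times\{5,8\}$ respectively.
\end{itemize}
Hence a matrix with binary rank $9$ (by the paper's computation) admits no fooling set of size $9$. Even a correct example might therefore not be certifiable by your method.

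To close the gap you must exhibit a specific singular tournament matrix. You then need either a fooling set of size $n$ for it, which means searching for that stronger property from the start, or an exhaustive verification that $\operatorname{rank}_{\mathrm{bin}}(A)=n$, as the paper does.
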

\begin{proof}
We exhibit such a tournament for \(n=9\).
Let \(T\) be the tournament on vertex set \(\{1,\dots,9\}\) with adjacency matrix \(A=(a_{ij})\) given by
\[
A=
\left(
\begin{array}{ccc|ccc|ccc}
0&1&0&0&1&0&0&1&1\\
0&0&1&0&0&1&1&0&1\\
1&0&0&1&0&0&1&1&0\\ \hline
1&1&0&0&1&0&1&1&1\\
0&1&1&0&0&1&1&1&1\\
1&0&1&1&0&0&1&1&1\\ \hline
1&0&0&0&0&0&0&1&0\\
0&1&0&0&0&0&0&0&1\\
0&0&1&0&0&0&1&0&0
\end{array}
\right)
\cong
\left(
\begin{array}{ccc}
P & P & \bar{I}\\
\bar{P} & P & P\\
I & 0 & P
\end{array}
\right).
\]
\noindent
Here \(P\) denotes a (fixed) permutation matrix, \(I\) denotes the identity matrix, and \(J\) denotes the all-ones matrix (so that \(\bar{I}=J-I\) and \(\bar{P}=J-P\)).
\paragraph{}
Let
\(
x=(1,1,1,1,1,1,-1,-1,-1)^{\mathsf T}.
\)
A direct check using the above matrix shows that \(Ax=\mathbf{0}\); hence \(A\) is singular. Furthermore, \(r_{\mathbb{Z}^+}(A)=9\); this was verified computationally using a Gurobi ILP solver. The source code is available in our \href{https://github.com/anandbabunb1/ILP_Binary_rank}{\faGithub \: GitHub repository}.

\end{proof}

\bibliographystyle{plain}
\bibliography{reference_arxiv}

\end{document}